\definecolor{verylight}{gray}{0.97}
\definecolor{light}{gray}{0.9}
\definecolor{medium}{gray}{0.85}
\def\frk{\mathfrak}               
\def\Phi{{\frk N}}
\def\opn#1#2{\def#1{\operatorname{#2}}} 
\opn\chara{char} \opn\length{\ell} \opn\pd{pd} \opn\rk{rk}
\opn\projdim{proj\,dim} \opn\injdim{inj\,dim} \opn\rank{rank}
\opn\depth{depth} \opn\grade{grade} \opn\height{height} \opn\bheight{bigheight}
\opn\embdim{emb\,dim} \opn\codim{codim}
\opn\Tr{Tr} \opn\bigrank{big\,rank}
\opn\superheight{superheight}\opn\lcm{lcm}
\opn\trdeg{tr\,deg}
\opn\reg{reg} \opn\lreg{lreg} \opn\ini{in} \opn\lpd{lpd}
\opn\size{size}\opn{\mult}{mult}
\opn\div{div} \opn\Div{Div} \opn\cl{cl} \opn\Cl{Cl}
\opn\Spec{Spec} \opn\Supp{Supp} \opn\supp{supp} \opn\Sing{Sing}
\opn\Ass{Ass} \opn\Min{Min}
\opn\Ann{Ann} \opn\Rad{Rad} \opn\Soc{Soc}
\opn\Syz{Syz} \opn\Im{Im} \opn\Ker{Ker} \opn\Coker{Coker}
\opn\Am{Am} \opn\Hom{Hom} \opn\Tor{Tor} \opn\Ext{Ext}
\opn\End{End} \opn\Aut{Aut} \opn\id{id} \opn\ini{in}
 \opn\comp{comp}
\opn\nat{nat}
\opn\pff{pf}
\opn\Pf{Pf} \opn\GL{GL} \opn\SL{SL} \opn\mod{mod} \opn\ord{ord}
\opn\Gin{Gin} \opn\reg{reg}
\opn\Hilb{Hilb}\opn\adeg{adeg}\opn\std{std}\opn\ip{infpt}
\opn\Pol{Pol}
\opn\sat{sat}
\opn\Var{Var}
\opn\Gen{Gen}
\opn\indmatch{indmatch}
\opn\aff{aff} \opn\con{conv} \opn\relint{relint} \opn\st{st}
\opn\lk{lk} \opn\cn{cn} \opn\core{core} \opn\vol{vol}
\opn\link{link} \opn\star{star}
\opn\gr{gr}
\def\pot#1#2{#1[\kern-0.28ex[#2]\kern-0.28ex]}
\opn\dirlim{\underrightarrow{\lim}}
\opn\inivlim{\underleftarrow{\lim}}
\let\to=\rightarrow
\def\Implies{\ifmmode\Longrightarrow \else
        \unskip${}\Longrightarrow{}$\ignorespaces\fi}
\def\implies{\ifmmode\Rightarrow \else
        \unskip${}\Rightarrow{}$\ignorespaces\fi}
\def\iff{\ifmmode\Longleftrightarrow \else
        \unskip${}\Longleftrightarrow{}$\ignorespaces\fi}
\newtheorem{Theorem}{Theorem}[section]
\newtheorem{Corollary}[Theorem]{Corollary}
\newtheorem{Proposition}[Theorem]{Proposition}
\let\epsilon\varepsilon
\let\phi=\varphi
\let\kappa=\varkappa
\def\qed{\ifhmode\textqed\fi
      \ifmmode\ifinner\quad\qedsymbol\else\dispqed\fi\fi}
\def\textqed{\unskip\nobreak\penalty50
       \hskip2em\hbox{}\nobreak\hfil\qedsymbol
       \parfillskip=0pt \finalhyphendemerits=0}
\def\dispqed{\rlap{\qquad\qedsymbol}}
\opn\dis{dis}
\def\pnt{{\raise0.5mm\hbox{\large\bf.}}}
\opn\Lex{Lex}
\newcommand{\inD}[1][\relax]{\def\argone{#1}\def\temprelax{\relax}
  \ifx\argone\temprelax\right.\else\,\middle|#1\right.{}\fi}
\newif\ifbinary
\begin{document}

\title{On the binomial edge ideals of proper interval graphs}

\author{Herolistra Baskoroputro}
\subjclass{}

\address{Herolistra Baskoroputro, Abdus Salam School of Mathematical Sciences (GC University Lahore), 68-B New Muslim Town, Lahore 54600, Pakistan} \email{h.baskoroputro@gmail.com}


\thanks{The author would like to thank the Facuty of Mathematics and Computer Science of the "Ovidius" University of Constanta for the hospitality during the visit of author. The visit of the author to Constanta was supported by ASSMS, which mainly funded by HEC Pakistan}

\begin{abstract}
We prove several cases of the Betti number conjecture for the binomial edge ideal $J_G$ of a proper interval graph  $G$ (also known as closed graph). Namely, we show that this conjecture is true for the linear strand of $J_G$, and true in general for any proper interval graph $G$ such that the regularity of $S/J_G$ equals two.

\end{abstract}
\subjclass[2010]{13C05, 13C14, 13C15, 05E40, 05C25}
\maketitle

\section*{Introduction}
The proper interval graphs are known since a while in combinatorics. They were first introduced in \cite{H}. A finite simple undirected graph $G$ on the vertex set $[n]$ is called a \emph{proper interval graph} (in brief \emph{PI graph}) if it admits a proper interval ordering. This means that there exists a labeling of the vertices of $G$ such that for any $1 \leq i < j < k \leq n$, if $\{i,k\}$ is an edge of $G$, then $\{i,j\}$ and $\{j,k\}$ are edges of of $G$ as well \cite[Theorem 1]{LO}. PI graphs are also known as unit interval graphs or indifference graphs. Several other properties and characterizations of PI graphs can be found in \cite{R}, \cite{Ro}, \cite{Fi} \cite{Ga}, \cite{Go}, \cite{HMP}.

Binomial edge ideals were introduced in \cite{HHHKR} and \cite{Oh}. They are defined as follows:
If $G$ is a simple graph on $[n]$, then its associated binomial ideal $J_G \subset S=K[x_1,\ldots,x_n,y_1,\ldots,y_n]$ is generated by the binomials $f_{ij}=x_iy_j-x_jy_i, 1\leq i < j\leq n$ with $\{i,j\}\in E(G).$

Various properties of binomial edge ideals have been studied in several papers, and some interesting still open questions in this topic exist. For example, one of the most intriguing conjectures regards binomial edge ideals associated with PI graphs. This conjecture was stated in \cite{EHH} and it claims that, for any PI graph $G$ on the vertex set $[n]$, $J_G$ and its initial ideal with respect to the lexicographic order share the same graded Betti numbers. We shall refer to this conjecture as the \textit{Betti number conjecture for PI graphs}. So far, this conjecture was proved for PI graphs whose binomial edge ideals are Cohen-Macaulay \cite[Propositin 3.2]{EHH}.


In Theorem \ref{main2}, we prove this conjecture for any PI graph $G$ with $\reg(S/J_G) =2$.
The first main step in proving Theorem \ref{main2} is Theorem \ref{main} where we show that $S/J_G$ and $S/\ini_<(J_G)$ share the same linear stand in the Betti diagram for any PI graph $G$.

The paper is organized as follows. In Section \ref{recall}, we recall basic facts about binomial edge ideals of PI graphs and their initial ideals. In Section \ref{Initial strand}, we prove Theorem \ref{main} which states that, if $G$ is a PI graph, then \begin{equation} \beta_{i,i+1}(S/J_G) = \beta_{i,i+1}(S/\ini_<(J_G)) = i f_i (\Delta(G)),\end{equation} where $f_i (\Delta(G))$ denotes the number of cliques with $i+1$ vertices in the clique complex $\Delta(G)$ of $G$. Finally, we prove the Betti number conjecture for any PI graph $G$ in the case that $\reg(S/J_G)=2$.

\section{Preliminaries}\label{recall}
In this section we  review fundamental results on binomial edge ideals that will be used in the next sections. To begin with, we fix some notation and present the basic notions which we use in the main sections.

Let $K$ be a field. Let $[n]=1,2,\ldots,n$ for $n \in \mathbb{N}.$ Let $G$ be a simple graph on the vertex set $[n]$. This means that $G$ has no loops, no multiple edges, and it is undirected. We denote the edge set of $G$ by $E(G)$. For graphs, we use the standard terminology and notation. For example, if $\mathcal{S} \subset [n]$, $G_\mathcal{S}$ denotes the restriction of $G$ to $\mathcal{S}$ and $G^c$ denotes the complement of the graph $G$.

The associated \textit{binomial edge ideal} of $G$ is $J_G \subset S=K[x_1,\ldots,x_n,y_1,\ldots,y_n]$ which is generated by the binomials $f_{ij}=x_iy_j-x_jy_i, 1\leq i < j\leq n$ with $\{i,j\}\in E(G).$ It is clear that we can neglect isolated vertices of $G$, hence we shall assume that our graphs have no isolated vertex throughout this paper.

In the pioneering paper \cite{HHHKR}, it is shown that the generators of $J_G$ form a (quadratic) Gr\"obner basis with respect to the lexicographic order induced by the natural ordering of the indeterminates if and only if $G$ is a \textit{proper interval (PI) graph}. It should be noted that in \cite{HHHKR}, the authors use the term "closed graph" for the PI graph. Nevertheless, we use the notion PI graph which has been well known in combinatorics since around 60 years \cite{H}. The equivalence of PI graphs and closed graphs is shown in \cite{CR2}. Some other papers that discuss properties of PI graphs related with commutative algebra are \cite{CA}, \cite{Cr}, \cite{CR}, \cite{EHH}, \cite{EHH2}, \cite{EHH3}, \cite{M}.

For a graph $G$, its \textit{clique complex} $\Delta(G)$ is the simplicial complex of all its cliques, that is, all complete subgraphs of $G$. The maximal cliques of $G$ are called \textit{facets} of $\Delta(G)$. In \cite[Theorem 2.2]{EHH}, it is shown that $G$ is a PI graph if and only if there exists a labeling of $G$ such that all the facets of the clique complex $\Delta(G)$ of $G$ are intervals $[a,b] = \{a,a+1,\ldots,b-1,b\}\subset [n].$ This means, in particular, that if $\{i,j\}\in E(G)$, then for any $i\leq  k< \ell \leq j$, $\{k,\ell\}\in E(G).$ When a PI graph $G$ is given, we always assume that its vertices are labeled such that the facets of its clique complex are of the form $ [a_i,b_i], 1 \leq i \leq r,$ with $1 = a_1 < a_2 < \cdots < a_r < b_r = n.$


Let $G$ be a PI graph. Let $<$ be the lexicographical order on $S$, induced by $x_1 > \ldots x_n > y_1 > \ldots >y_n$. Then the initial ideal of the binomial ideal $J_G$ is the monomial ideal, $\ini_<(J_G)=(x_iy_j: \{i,j\}\in E(G))$. This is the monomial edge ideal of a bipartite graph on the vertex set $\{x_1,\ldots,x_n\}\cup\{y_1,\ldots,y_n\}$ with edge set $\{\{x_i,y_j\}:\{i,j\} \in E (G)\}$. We set $\ini_<(G)$ to be this bipartite graph. Therefore, we have $\ini_<(J_G)=I(\ini_<(G)).$

\section{Linear strand of a binomial edge ideal of a PI graph and of its initial ideal}\label{Initial strand}
We recall from \cite{RVT} the formula for the linear strand of an edge ideal of a graph. Let $H$ be a graph on the vertex set $[n]$, and let $I(H) \subset R=K[x_1,\ldots,x_n]$ be its monomial edge ideal.

\begin{Proposition}\cite[Proposition 2.1]{RVT}
\label{linear strand of the edge ideal}
\begin{equation*}\beta_{i,i+1}(R/I(H)) = \sum_{\mathcal{S} \subseteq V_H, |\mathcal{S}| = i + 1} (\# \comp(H^c_\mathcal{S})-1),\end{equation*} where $\#\comp(H^c_\mathcal{S})$ is the number of the connected components of $H^c_\mathcal{S}$. \end{Proposition}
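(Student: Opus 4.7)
The plan is to derive this formula as a direct application of Hochster's formula to the edge ideal $I(H)$. First I would observe that $I(H)$ is the Stanley--Reisner ideal of the independence complex $\mathrm{Ind}(H)$, whose faces are the independent sets of $H$. Hochster's formula then expresses
$$\beta_{i,j}(R/I(H)) = \sum_{W \subseteq V_H,\ |W| = j} \dim_K \tilde H_{j-i-1}\bigl(\mathrm{Ind}(H)_W;K\bigr),$$
where $\mathrm{Ind}(H)_W$ denotes the induced subcomplex on $W$.

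The next step is to specialize to the linear strand $j = i+1$, so that the homological degree collapses to $0$ and each summand becomes $\dim_K \tilde H_0(\mathrm{Ind}(H)_W;K)$. The key combinatorial identification is then to recognize the $1$-skeleton of $\mathrm{Ind}(H)_W$: a pair $\{u,v\} \subseteq W$ is a face precisely when $\{u,v\}$ is independent in $H$, i.e., when $\{u,v\} \in E(H^c_W)$. Hence this $1$-skeleton is exactly the graph $H^c_W$ on the vertex set $W$, and so the two objects have the same number of connected components. Combining this with the standard identity $\dim_K \tilde H_0(\Delta;K) = \#\comp(\Delta) - 1$, valid for any nonempty complex $\Delta$, yields the asserted formula.

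The one point that deserves care is the correct application of Hochster's formula at degree $j = i+1$: one must check that each $W$ in the sum is nonempty (which is automatic since $|W| = i+1 \geq 1$), so that the identification of $\tilde H_0$ with the reduced count of components is valid and no spurious contribution from the empty complex appears. Beyond this bookkeeping, the argument is essentially a dictionary translation between the homological language of $\mathrm{Ind}(H)_W$ and the graph-theoretic language of $H^c_W$, and no serious obstacle arises.
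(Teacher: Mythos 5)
Your argument is correct: Hochster's formula applied to the independence complex $\mathrm{Ind}(H)$, together with the identification of the $1$-skeleton of $\mathrm{Ind}(H)_W$ with $H^c_W$ and the identity $\dim_K \tilde H_0 = \#\comp - 1$ for nonempty complexes, gives exactly the stated formula. The paper itself offers no proof (it only cites \cite[Proposition 2.1]{RVT}), and your derivation is the standard one underlying that reference, so there is nothing further to reconcile.
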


From the above proposition, for bipartite graphs we can derive a more specific formula.

\begin{Corollary}\label{bipartite}
Let $H$ be a bipartite graph on the vertex set $V(H)$. Then,
\[\beta_{i,i+1}(R/I(H)) = \#\{\mathcal{S} \subseteq V(H) : |\mathcal{S}| = i + 1, H^c_\mathcal{S} \text{ has 2 connected components}\}.\]
\end{Corollary}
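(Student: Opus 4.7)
The plan is to deduce the corollary directly from Proposition \ref{linear strand of the edge ideal} by showing that, when $H$ is bipartite, every induced subgraph $H^c_\mathcal{S}$ has at most two connected components. Once this is established, each term $\#\comp(H^c_\mathcal{S}) - 1$ in the sum of Proposition \ref{linear strand of the edge ideal} is either $0$ or $1$, so the sum collapses to a count of those subsets $\mathcal{S}$ for which $H^c_\mathcal{S}$ has exactly two connected components, which is precisely the right-hand side of the corollary.

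To prove the key structural fact, let $X, Y$ be the two parts of the bipartition of $H$, so $V(H) = X \sqcup Y$ and every edge of $H$ joins $X$ to $Y$. Since $X$ and $Y$ are independent sets in $H$, they induce complete subgraphs in the complement $H^c$. Consequently, for any $\mathcal{S} \subseteq V(H)$, the induced subgraphs $H^c_{\mathcal{S} \cap X}$ and $H^c_{\mathcal{S} \cap Y}$ are each complete, hence connected (or empty). It follows that $H^c_\mathcal{S}$ is the union of at most two cliques and therefore has at most two connected components.

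Substituting this into the formula of Proposition \ref{linear strand of the edge ideal} yields
\[
\beta_{i,i+1}(R/I(H)) \;=\; \sum_{\substack{\mathcal{S} \subseteq V(H) \\ |\mathcal{S}|=i+1}} \bigl(\#\comp(H^c_\mathcal{S}) - 1\bigr) \;=\; \#\{\mathcal{S} \subseteq V(H) : |\mathcal{S}|=i+1,\ \#\comp(H^c_\mathcal{S}) = 2\},
\]
which is exactly the statement to be proved.

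There is essentially no obstacle here: the whole argument rests on the elementary observation that each part of a bipartition is a clique in the complementary graph, so the content of the corollary is just a simplification of the general formula in the bipartite setting. The only thing to be a bit careful about is to note that $H^c_\mathcal{S}$ is never empty once $\mathcal{S} \neq \emptyset$ and that $\mathcal{S}$ lying entirely in $X$ or entirely in $Y$ yields a single connected component (contributing $0$ to the sum), while a mixed $\mathcal{S}$ contributes $0$ or $1$ according to whether some edge of $H^c$ crosses the bipartition within $\mathcal{S}$.
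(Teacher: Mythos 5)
Your proof is correct and follows essentially the same route as the paper: both deduce the formula from Proposition \ref{linear strand of the edge ideal} by observing that the two parts of the bipartition become cliques in $H^c$, so $H^c_\mathcal{S}$ always has at most two connected components and the sum collapses to a count. Your phrasing via ``union of at most two cliques'' is a slightly cleaner packaging of the paper's case analysis, but the content is identical.
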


\begin{proof}
Without lost of generality, we may take $A=\{x_1, \ldots, x_{|A|}\}$ and $B=\{y_1,\ldots, y_{|B|}\}$, the bipartition of $H$. Thus, $V(H)= A\bigcup B$.
Let $\mathcal{S}$ be a subset of $V(H)$. If $\mathcal{S} \subseteq A$ or $\mathcal{S} \subseteq B$, then $H_\mathcal{S}^c$ is a complete graph on vertex set $\mathcal{S}$ which has only one connected component.

The other possible choice is when $\mathcal{S}$ consists of a subset of $A$ and a subset of $B$. That is, $\mathcal{S} = \{x_{k_1}, \ldots, x_{k_a}:1 \leq k_1 < \ldots <k_a \leq |A|\} \bigcup \{y_{\ell_1}, \ldots , y_{\ell_b}:1 \leq l_1 < \ldots < \ell_b \leq |B|\}$.

Consider the case when $H_{\mathcal{S}}$ is a complete bipartite graph, i.e., $\{x_{k_i},y_{\ell_j}\} \in E(H)$ for all $i \in [a], j \in [b].$ Then, $H^c_\mathcal{S}$ has 2 connected components, since it does not have any edge between $A \bigcap \mathcal{S}$ and $B\bigcap \mathcal{S}$.

The last case is when $H_{\mathcal{S}}$ is a non-complete bipartite graph. Then there exist $i \in [a], j \in [b]$ such that $\{x_{k_i},y_{\ell_j}\} \not\in E(H)$. Then $\{x_{k_i},y_{\ell_j}\} \in E(H^c_\mathcal{S})$. Since in $H^c_\mathcal{S}$ the vertex $x_{k_i}$ is connected to all other vertices in $A \bigcap \mathcal{S}$ and the vertex $y_{\ell_j}$ is connected to all other vertices in $B \bigcap \mathcal{S}$, then $H^c_\mathcal{S}$ has only one connected component.

Therefore, for a bipartite graph $H$ and any $\mathcal{S} \subseteq V(H)$, $H^c_\mathcal{S}$ has either one or two connected components, which means that \#comp$((H^c_\mathcal{S})-1)$ is either 0 or 1, respectively. Hence, by Proposition \ref{linear strand of the edge ideal}, we have $\beta_{i,i+1}(R/I(H))$ equals the number of subset $\mathcal{S}$ with $i+1$ vertices such that $H^c_\mathcal{S}$ has two connected components. \end{proof}

The following proposition is a particular case of Corollary 4.3 in \cite{HKM}.
\begin{Proposition}\label{clique strand}
Let $G$ be a graph on the vertex set $[n]$ and $J_G$ be its binomial edge ideal. Let $\Delta(G)$ be the clique complex of $G$ and let $f_i(\Delta(G))$ denote the number of the cliques of $G$ with $i+1$ vertices. Then \begin{equation} \beta_{i,i+1}(S/J_G) = if_i(\Delta(G)).\end{equation}
\end{Proposition}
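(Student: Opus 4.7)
The plan is to compute the linear Betti numbers $\beta_{i,i+1}(S/J_G)$ by refining to the multigraded setting. I would endow $S$ with the $\ZZ^n$-grading in which $\deg x_j = \deg y_j = e_j$. Under this refinement, each generator $f_{jk}$ has squarefree multidegree $e_j + e_k$, so the entire linear strand of $S/J_G$ is supported on squarefree multidegrees, and
\[
\beta_{i,i+1}(S/J_G) = \sum_{W \subseteq [n],\ |W| = i+1} \beta_{i, \mathbf{e}_W}(S/J_G),
\]
where $\mathbf{e}_W = \sum_{j \in W} e_j$. A standard restriction argument then identifies $\beta_{i, \mathbf{e}_W}(S/J_G)$ with $\beta_{i, \mathbf{e}_W}(S_W/J_{G_W})$ for $S_W = K[x_j, y_j : j \in W]$ and $G_W$ the induced subgraph on $W$, reducing everything to the computation of a single top multigraded Betti number for each induced subgraph on $i+1$ vertices.

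Next I would split into two cases. If $G_W = K_{i+1}$, then $J_{G_W}$ is the ideal of $2 \times 2$ minors of a generic $2 \times (i+1)$ matrix, resolved by the Eagon-Northcott complex. A direct count of its ranks gives $\beta_{i,i+1}(S_W/J_{G_W}) = i \binom{i+1}{i+1} = i$, and this contribution sits entirely in the multidegree $\mathbf{e}_W$. If $G_W$ is not a clique, then I expect $\beta_{i, \mathbf{e}_W}(S_W/J_{G_W}) = 0$: the first linear syzygies among the $f_{jk}$ are generated by Plücker-type relations $x_\ell f_{jk} - x_k f_{j\ell} + x_j f_{k\ell}$ (and the $y$-analogues), each of which requires the triangle on $\{j,k,\ell\}$ to be complete in $G$, and the higher syzygies in the linear strand propagate the same completeness requirement. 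Summing the contribution $i$ over all $(i+1)$-cliques of $G$ then yields $\beta_{i,i+1}(S/J_G) = i f_i(\Delta(G))$.

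The main technical obstacle is the vanishing of $\beta_{i, \mathbf{e}_W}(S_W/J_{G_W})$ for non-complete $G_W$; making this rigorous would require either an induction on $i$ using the multigraded Koszul homology of the $f_{jk}$, or a direct comparison with the Eagon-Northcott resolution (since $J_{G_W}$ would be a proper subideal generated by too few binomials to realize the top linear Betti number). Because the statement is already cited as a particular case of \cite[Corollary 4.3]{HKM}, which treats the whole linear strand of $J_G$ for arbitrary graphs via a Hochster-style decomposition combined with the resolution of determinantal ideals, the cleanest route in the paper is to invoke that general result and verify the present formula as its specialization.
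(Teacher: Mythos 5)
Your closing observation is exactly what the paper does: its entire proof of Proposition~\ref{clique strand} is the citation ``cf.\ \cite[Corollary 4.3]{HKM}'', so invoking that general result on the linear strand of determinantal facet ideals and checking the specialization is the intended argument, and on that point your proposal agrees with the paper. The direct sketch you give first is a reasonable reconstruction of the ideas behind the cited theorem (multigraded decomposition over $\ZZ^n$, the Eagon--Northcott count contributing $i$ for each $(i+1)$-clique, vanishing otherwise), but as a self-contained proof it has two gaps. First, the claim that the linear strand is supported on squarefree multidegrees does not follow merely from the generators $f_{jk}$ having squarefree multidegree $e_j+e_k$: a priori a linear $i$-th syzygy could sit in a multidegree $\mathbf{a}$ with $|\mathbf{a}|=i+1$ but $|\supp(\mathbf{a})|\leq i$, and ruling this out already requires the restriction-to-induced-subgraphs lemma together with a vanishing statement of the same nature as the one you defer. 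Second, that deferred statement --- the vanishing of $\beta_{i,\mathbf{e}_W}(S_W/J_{G_W})$ when $G_W$ is not complete --- is the actual content of the theorem; the Pl\"ucker-relation heuristic only controls the first syzygies, and you correctly flag that the propagation to higher linear syzygies is unproved. Since both gaps are precisely what \cite{HKM} supplies, the honest version of your argument collapses to the citation, which is what the paper writes.
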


\begin{proof} cf. \cite[Corollary 4.3]{HKM}. \end{proof}

Now we are ready to prove our first main theorem.

\begin{Theorem}\label{main}
Let $G$ be a PI graph over the vertex set $[n]$. Let $<$ be the lexicographical order on $S$, induced by $x_1 > \cdots > x_n > y_1 > \cdots >y_n$. Then we have \begin{equation} \beta_{i,i+1}(S/J_G) = \beta_{i,i+1}(S/\ini_<(J_G)) = if_i(\Delta(G)).\end{equation}
\end{Theorem}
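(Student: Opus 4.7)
The plan is to prove the two equalities separately. The first, $\beta_{i,i+1}(S/J_G)=if_i(\Delta(G))$, is Proposition \ref{clique strand}, which holds for any graph $G$, so the real task is to establish $\beta_{i,i+1}(S/\ini_<(J_G))=if_i(\Delta(G))$ for PI graphs $G$; I will do this by counting the subsets appearing in Corollary \ref{bipartite}.

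Set $H:=\ini_<(G)$, the bipartite graph with parts $\{x_1,\ldots,x_n\}$ and $\{y_1,\ldots,y_n\}$ and edges $\{x_i,y_j\}$ for which $i<j$ and $\{i,j\}\in E(G)$. By Corollary \ref{bipartite}, together with the observation from its proof that $H^c_\mathcal{S}$ has two components exactly when $H_\mathcal{S}$ is complete bipartite with both sides nonempty, $\beta_{i,i+1}(S/\ini_<(J_G))$ equals the number of subsets $\mathcal{S}\subseteq V(H)$ of size $i+1$ having this property. Writing such an $\mathcal{S}$ as $\{x_a:a\in A\}\cup\{y_b:b\in B\}$ for $A,B\subseteq [n]$, the condition becomes: $A,B$ are disjoint and nonempty with $|A|+|B|=i+1$, and for all $a\in A$, $b\in B$ one has $a<b$ and $\{a,b\}\in E(G)$. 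The inequality $a<b$ for all pairs is equivalent to $\max A<\min B$.

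The key step, and the only place where the PI hypothesis is used, is to show that these conditions force $A\cup B$ to be a clique of $G$. Let $a^\ast=\min A$ and $b^\ast=\max B$; then $a^\ast<b^\ast$ and $\{a^\ast,b^\ast\}\in E(G)$, and the PI convexity property (if $\{i,k\}\in E(G)$ and $i<j<k$, then $\{i,j\},\{j,k\}\in E(G)$) immediately yields that every pair of elements in the interval $[a^\ast,b^\ast]$ is an edge of $G$: one first deduces $\{a^\ast,j\}\in E(G)$ for all $a^\ast<j<b^\ast$, and then $\{j,j'\}\in E(G)$ for all $a^\ast<j<j'\leq b^\ast$. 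Hence $[a^\ast,b^\ast]$ is a clique, and so is $A\cup B\subseteq[a^\ast,b^\ast]$. I expect this to be the heart of the argument, though the conclusion follows rather quickly from the defining property of PI graphs.

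With this equivalence in place, the final count is combinatorial. For each $(i+1)$-clique $K=\{k_1<\cdots<k_{i+1}\}$ of $G$, the admissible pairs $(A,B)$ are exactly the $i$ \emph{order splits} $A=\{k_1,\ldots,k_s\}$, $B=\{k_{s+1},\ldots,k_{i+1}\}$ for $s=1,\ldots,i$, since the constraint $\max A<\min B$ rules out any other partition of $K$ into two nonempty parts. Summing over all $(i+1)$-cliques produces exactly $i\cdot f_i(\Delta(G))$, which matches the first equality and completes the proof.
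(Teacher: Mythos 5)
Your proof is correct and follows essentially the same route as the paper: Proposition \ref{clique strand} for the first equality, Corollary \ref{bipartite} to reduce the second to counting complete bipartite induced subgraphs of $\ini_<(G)$, and the bijection with pairs (clique, order split) giving the factor $i$. If anything, you are more explicit than the paper at the one point where the PI hypothesis is actually used (deducing that $A\cup B$ is a clique from the cross-edges via the interval $[a^\ast,b^\ast]$), which the paper passes over with a single ``this implies.''
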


\begin{proof}
By Proposition \ref{clique strand}, we know that $\beta_{i,i+1}(S/J_G) = if_i(\Delta(G)).$ Hence, we need to show that $\beta_{i, i +1}(S/\ini_<(J_G))= if_i(\Delta(G)).$

As we have stated in the introduction, we may assume that the graph $G$ is labeled such that the facets of $\Delta(G)$ are the intervals $[a_1,b_1],\ldots,[a_r,b_r]$ where $1 =a_1 < a_2 < ... < a_r < b_r = n.$ Since $G$ is a PI graph, one can consider the bipartite graph $H = \ini_<(G)$ on the vertex set $V(H) = \{x_1,\ldots,x_n\} \bigcup \{y_1,\ldots,y_n\}$ with the edge set $E(H)= \{ \{x_r,y_s\}: r < s \text{ and } \{r,s\} \in E(G)\}$. Observe that $I(H)$ is an ideal in $K[x_1,\ldots,x_n,y_1,\ldots,y_n] = S$.

From Corollary \ref{bipartite}, we have \begin{eqnarray*}& \beta_{i,i+1}(S/ \ini_<(J_G)) = \beta_{i,i+1}(S/I(H)) = \\
 & = \#\{\mathcal{S} \subseteq V(H) : |\mathcal{S}| = i + 1, H^c_\mathcal{S} \text{ has 2 connected components}\}.\end{eqnarray*}

Let $X=\{x_1,\ldots,x_n\}$, $Y=\{y_1,\ldots,y_n\}$, and $\mathcal{S} \subset V(H)$ be a set with $|\mathcal{S}|=i+1$.
By the proof of Corollary \ref{bipartite}, we know that $H^c_\mathcal{S} $ has two connected components if and only if $\mathcal{S} \bigcap X$ and $\mathcal{S} \bigcap Y$ are nonempty and $\{x_r,y_s\}\in E(H)$ for all $x_r \in \mathcal{S}\bigcap X$ and $y_s \in \mathcal{S} \bigcap Y$.

Let $\mathcal{S}\bigcap X = \{x_{k_1},\ldots,x_{k_j}\}, k_1 < \cdots < k_j,$ and $\mathcal{S}\bigcap Y = \{y_{k_{j+1}},\ldots,y_{k_{i+1}}\}, k_{j+1} < \cdots < k_{i+1},$ for an integer $1 \leq j \leq i$. Then, $H_\mathcal{S}^c$ has two connected components if and only if $\{x_{k_r},y_{k_s}\} \in E(H)$ for all $1 \leq r \leq j$ and $j +1 \leq s \leq i +1$, which is equivalent to saying that $\{k_r,k_s\} \in E(G)$ for all $1 \leq r \leq j$ and $j+1 \leq s \leq i+1$. This implies that $\{k_1,\ldots,k_j,k_{j+1},\ldots,k_{i +1}\}$ is a clique of $G$.

This shows that, for any $\mathcal{S} \subset V(H)$ such that $|\mathcal{S}| = i +1$ and $H_\mathcal{S}^c$ has two connected components, we may associate it to a pair $(j,C_{i+1})$ where $1 \leq j \leq i$ is an integer and $C_{i+1}$ is a clique of the graph $G$ with $i+1$ vertices.

Conversely, let $1 \leq j \leq i$ be an integer and let $C_{i+1} =\{k_1,\ldots,k_{i+1}\}$ be a clique of $G$, with $k_1 < \cdots < k_{i+1}$. Then, if $\mathcal{S} = \{x_{k_1},\ldots,x_{k_j}\} \bigcup \{y_{k_{j+1}},\ldots,y_{k_{i+1}}\}$, we have $|\mathcal{S}|=i+1$ and $\{k_r,k_s\}\in E(G)$ for all $1 \leq r \leq j$ and $j+1 \leq s \leq i +1$, hence $\{x_{k_r},y_{k_s}\} \in E(H)$. Thus, $H_\mathcal{S}^c$ has two connected components, namely the complete graphs on the vertex sets $\{x_{k_1},\ldots,x_{k_j}\}$ and $\{y_{k_{j+1}},\ldots,y_{k_{i+1}}\}$, respectively. Obviously, the above maps $\mathcal{S} \to (j,C_{i+1})$ and $(j,C_{i+1}) \to \mathcal{S}$ are inverse.

Therefore, we have obtained the following equality:
\begin{eqnarray*} & \#\{\mathcal{S} \subseteq V(H) : |\mathcal{S}| = i + 1, H^c_\mathcal{S} \text{ has 2 connected components}\} = \\
& = \#\{(j,C_{i+1}) : 1 \leq j \leq i \text{ and $C_{i+1}$ is a clique of $G$ with $i+1$ vertices }\},
\end{eqnarray*} which implies that $\beta_{i, i +1}(S/I(H))= if_i(\Delta(G)).$
\end{proof}

The condition that $G$ is a PI graph in Theorem \ref{main} cannot be omitted. For example, consider the graph $G$ on the vertex set $\{1,2,3,4\}$ and with edge set $\{\{1,2\},\{1,3\},\{1,4\}\}$. Then $\beta_{2,3}(S/J_G)=\beta_{3,4}(S/J_G)=0$, while $\beta_{2,3}(S/\ini_<J_G)=3$ and $\beta_{3,4}(S/\ini_<J_G)=1$.

\section{The Betti number theorem for PI graphs associated with binomial edge ideals with small regularity}
Before we discuss our main theorem about the graded Betti numbers for the binomial edge ideals of a PI graph $G$ with reg$(S/J_G)=2$, first we show when one gets $\reg(S/(J_G))=2$.

\begin{Proposition}\label{characterization reg 2}
Let $G$ be a PI graph on the vertex set $[n]$. Then $reg(S/J_G) =2$ if and only if $G$ is in one of the following forms:
\begin{enumerate}
\item $G = K_m \bigcup K_p$ with $m + p =n$,
\item $G$ is connected  and $\Delta(G)$ is generated by two maximal cliques of the form $[1,b],[a,n]$ where $1 < a \leq b <n$.
\end{enumerate}
\end{Proposition}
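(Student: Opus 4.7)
The plan is to prove both implications, relying on the known formula $\reg(S/J_G) = \ell(G)$ for closed (PI) graphs, where $\ell(G)$ is the length of a longest induced path, together with the fact that $\reg$ is additive on tensor products of $K$-algebras over disjoint sets of variables.

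For the ``if'' direction, consider case (1): since the vertex sets of $K_m$ and $K_p$ partition $[n]$, the ideal $J_G$ decomposes as $J_{K_m}S + J_{K_p}S$ in disjoint variables, so $S/J_G \iso (S_1/J_{K_m}) \tensor_K (S_2/J_{K_p})$, where $S_i$ is the polynomial ring on the variables of the $i$-th component. Each factor has $\reg = 1$ because the ideal of $2 \times 2$ minors of a generic $2 \times t$ matrix has an Eagon--Northcott linear resolution, and additivity gives $\reg(S/J_G) = 2$. In case (2) I would verify $\ell(G) = 2$ directly: an induced $P_3$ is obtained from any triple $v_1 \in [1, a-1]$, $v_2 \in [a, b]$, $v_3 \in [b+1, n]$, and no induced $P_4$ can exist because every edge of $G$ must lie in $[1, b]$ or in $[a, n]$, which by a short case analysis is incompatible with the edge/non-edge pattern of a $P_4$.

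For the ``only if'' direction, suppose $\reg(S/J_G) = 2$. If $G$ has connected components $G_1,\dots,G_c$, then $\reg(S/J_G) = \sum_i \reg(S_i/J_{G_i}) \geq c$, each summand being at least $1$ since each $G_i$ contains an edge. So either $c = 2$ with both $\reg(S_i/J_{G_i}) = 1$, which via $\reg = \ell$ forces each $G_i$ to be complete (case (1)), or $c = 1$. In the connected case, let $F_1 = [a_1, b_1],\dots,F_r = [a_r, b_r]$ be the maximal cliques of $\Delta(G)$ in the standard labeling, with $a_1 = 1$ and $b_r = n$. Since $\reg = 2 \neq 1$, $G$ is not complete, so $r \geq 2$. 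Assume by contradiction $r \geq 3$ and pick
\[ v_1 \in F_1 \setminus F_2,\quad v_2 \in (F_1 \cap F_2) \setminus F_3,\quad v_3 \in (F_2 \cap F_3) \setminus F_1, \quad v_4 \in F_3 \setminus F_2, \]
each non-empty by the strict inequalities $a_1 < a_2 < a_3$ and $b_1 < b_2 < b_3$ together with the connectedness condition $a_{i+1} \leq b_i$. One checks that $v_1 v_2 v_3 v_4$ is an induced $P_4$ in $G$, contradicting $\ell(G) = 2$. Hence $r = 2$, the two cliques are $[1, b]$ and $[a, n]$, with $a \leq b$ (connectedness) and $1 < a$, $b < n$ (since neither clique equals $[1, n]$), yielding case (2).

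The main obstacle is the construction of the induced $P_4$: each $v_i$ must be pinned down to the precise set of cliques containing it (for instance $v_2$ lies only in $F_1$ and $F_2$, and $v_3$ only in $F_2$ and $F_3$) so that the three non-edges $\{v_1, v_3\}$, $\{v_2, v_4\}$, $\{v_1, v_4\}$ hold simultaneously with the three edges $\{v_i, v_{i+1}\}$. This is why the specific exclusions of $F_3$ from $v_2$'s cliques and of $F_1$ from $v_3$'s cliques in the choices above cannot be dropped; everything else in the argument is bookkeeping.
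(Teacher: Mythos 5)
Your proposal is correct and follows essentially the same route as the paper: both rest on the identity $\reg(S/J_G)=\ell(G)$ from \cite[Theorem 3.2]{EZ}, additivity of regularity over connected components, and the construction of an induced path of length $3$ whenever $\Delta(G)$ has at least three facets (your four-vertex choice is a slight generalization of the paper's specific path $1,a_2,b_2,b_3$). You are somewhat more careful than the paper in explicitly verifying the ``if'' direction of case (2), i.e.\ that the two-facet configuration admits no induced $P_4$, which the paper leaves implicit.
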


\begin{proof}
First consider the case when $G$ is not a connected graph, that is, $G = G_1 \bigcup \cdots \bigcup G_c, c \geq 2 $, where $G_1 \bigcup \cdots \bigcup G_c$ are the connected components of $G$. Then \[\reg(S/J_G) = \reg(S/J_1) + \cdots + \reg(S/J_c). \] From \cite[Theorem 3.2]{EZ}, $\reg(S/J_G) = \ell_1 + \cdots + \ell_c$, where $\ell_i$ denotes the length of a longest induced path in $G_i, 1 \leq i \leq c$. If reg$(S/J_G)=2$, the equality $\ell_1 + \cdots + \ell_c =\reg(S/J_G) = 2$ occurs if and only if $G$ has only two connected components and $\ell_1 = \ell_2 =1$, which means that $G_1, G_2$ are complete graphs.


%
The other case is when $G$ is connected. By \cite[Theorem 3.2]{EZ}, the length of the longest induced path in $G$ is equal to the regularity, which is 2. This holds if and only if $\Delta(G)$ has two maximal cliques of the form $[1,b],[a,n]$ where $1 < a \leq b <n$. Indeed, suppose that $\Delta(G)$ has at least $3$ facets, that is, the facets are $[1,b_1],[a_2,b_2],\ldots, [a_r,b_r=n], r \geq 3$. Then there exists an induced path in $G$ of length at least $3$ which contains the vertices $1,a_2,b_2,b_3$.
\end{proof}


Now we are ready to prove our main result, which shows that the Betti number conjecture for PI graphs is true in the case where reg$(S/J_G)=2$.
\begin{Theorem}\label{main2}

Let $G$ be a PI graph over vertex the set $[n]$. If reg$(S/J_G) = 2$, then
\begin{equation*} \beta_{ij}(S/J_G) = \beta_{ij}(S/\ini_<(J_G)) \end{equation*} for all $i,j$.
\end{Theorem}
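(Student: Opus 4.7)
The plan is to apply Proposition \ref{characterization reg 2}, which splits the problem into two cases: (i) $G = K_m \cup K_p$ with $m+p=n$, and (ii) $G$ connected with $\Delta(G) = \langle [1,b], [a,n] \rangle$ for some $1 < a \leq b < n$.

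In case (i), $J_{K_m}$ and $J_{K_p}$ are supported on disjoint sets of variables, so $J_G = J_{K_m} + J_{K_p}$ and $S/J_G = (S_1/J_{K_m}) \otimes_K (S_2/J_{K_p})$, where $S_1, S_2$ are the two blocks of the polynomial ring. The reduced lexicographic Gr\"obner basis of $J_G$ also splits (S-polynomials between different blocks reduce to zero), giving $S/\ini_<(J_G) = (S_1/\ini_<(J_{K_m})) \otimes_K (S_2/\ini_<(J_{K_p}))$. Both $J_{K_m}$ and $J_{K_p}$ are Cohen--Macaulay, so \cite[Proposition 3.2]{EHH} yields the Betti-number equality on each factor. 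The K\"unneth formula for Tor over a tensor product of polynomial rings then lifts this to $\beta_{ij}(S/J_G) = \beta_{ij}(S/\ini_<(J_G))$ for every $i, j$.

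In case (ii), Theorem \ref{main} already delivers the linear strand: $\beta_{i,i+1}(S/J_G) = \beta_{i,i+1}(S/\ini_<(J_G))$ for every $i$. Since $\reg(S/J_G) = 2$, the remaining nonzero Betti numbers of $S/J_G$ lie in the row $\beta_{i,i+2}$. The plan is to establish the companion bound $\reg(S/\ini_<(J_G)) \leq 2$; once this is in hand, both Betti tables are supported only on rows $i+1$ and $i+2$, and equality of Hilbert series forces the common numerator
\[ K(t) = 1 + \sum_{i \geq 1}(-1)^i \beta_{i,i+1} t^{i+1} + \sum_{i \geq 1}(-1)^i \beta_{i,i+2} t^{i+2} \]
to agree for both $S/J_G$ and $S/\ini_<(J_G)$. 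Comparing the coefficient of $t^{i+2}$ and then feeding in the linear-strand identity from Theorem \ref{main} yields $\beta_{i,i+2}(S/J_G) = \beta_{i,i+2}(S/\ini_<(J_G))$ for every $i$, which together with the linear strand match finishes the theorem.

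The main obstacle is precisely the regularity bound $\reg(S/\ini_<(J_G)) \leq 2$ in case (ii). The ideal $\ini_<(J_G) = I(H)$ is the edge ideal of the bipartite graph $H$ of Section \ref{recall}, and the hypothesis on $\Delta(G)$ forces $H = H_1 \cup H_2$, where each $H_i$ is the bipartite graph attached to one of the two facets $[1,b]$ and $[a,n]$, and the two pieces overlap along the block indexed by $[a,b]$. I would attack the bound either by computing the induced matching number of $H$ and showing it equals $2$, then invoking a regularity-equals-induced-matching result applicable to the bipartite class at hand; or else via the Mayer--Vietoris short exact sequence
\[ 0 \to S/(I(H_1) \cap I(H_2)) \to S/I(H_1) \oplus S/I(H_2) \to S/I(H) \to 0, \]
identifying the intersection explicitly as the edge ideal on $[a,b]$ of a similar ``upper-triangular'' bipartite shape, so that a mapping-cone / Tor long exact sequence comparison yields $\reg(S/I(H)) \leq 2$.
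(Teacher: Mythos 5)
Your proposal is correct, and in Case~2 it takes a genuinely different --- and in fact shorter --- route than the paper. The paper works with the two minimal primes $P = J_{K_n}$ and $Q=(\{x_i,y_i\}_{a\le i\le b},J_{K_{[1,a-1]}},J_{K_{[b+1,n]}})$, writes $J_G = P\cap Q$, checks via a lemma of Conca that the lex initial ideal commutes with this intersection, and then compares the two resulting long exact sequences of $\Tor$ term by term, using explicit knowledge of the graded Betti numbers of $S/P$, $S/Q$, $S/(P+Q)$ and of their initial ideals, together with Theorem~\ref{main}. You instead observe that once $\reg(S/J_G)=\reg(S/\ini_<(J_G))=2$, both Betti tables are supported on the strands $j=i+1$ and $j=i+2$; the equality of Hilbert series of $S/J_G$ and $S/\ini_<(J_G)$ forces the alternating sums $\sum_i(-1)^i\beta_{ij}$ to agree column by column, and Theorem~\ref{main} matches one of the two surviving terms in each column, leaving $\beta_{i,i+2}(S/J_G)=\beta_{i,i+2}(S/\ini_<(J_G))$. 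This is cleaner, and it notably renders the case distinction of Proposition~\ref{characterization reg 2} (including your Case~(i)) unnecessary, since the argument applies verbatim to any PI graph with $\reg(S/J_G)=2$. One correction: the step you single out as the main obstacle, namely $\reg(S/\ini_<(J_G))\le 2$, needs no new proof. The theorem of Ene and Zarojanu \cite[Theorem 3.2]{EZ}, which the paper itself invokes in the first line of its proof, states that for a PI (closed) graph both $\reg(S/J_G)$ and $\reg(S/\ini_<(J_G))$ equal the length of the longest induced path of $G$; in particular they coincide, so the bound is immediate and the induced-matching and Mayer--Vietoris machinery you sketch for it can be discarded.
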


\begin{proof}
Notice that since $G$ is a PI graph, then, from \cite[Theorem 3.2]{EZ} we have reg$(S/\ini_<(J_G)) = \reg(S/J_G) =2$. Therefore, $\beta_{ij}(S/\ini_<(J_G)) = 0 = \beta_{ij}(S/J_G)$ for all $j \geq i + 3$. We also have Theorem \ref{main} for $j = 1+1$. Therefore, we need to prove the equality only for $j = i+2$.

We have the following cases as in Proposition \ref{characterization reg 2}:\\
\textbf{Case 1:} Let $G = G_1 \cup G_2$ where $G_1 = K_m, G_2= K_p, n = m+p$. Let $S_1 = K[x_1, \ldots, x_m, y_1, \ldots, y_m]$ and $S_2= K[x_{m +1}, \ldots, x_n,y_{m+1},\ldots,y_{n}]$ be the polynomial rings with variables related to the vertices in $G_1$ and $G_2$, respectively. As $G_1,G_2$ are complete graphs, it is well known that, since the binomial edge ideal of a complete graph and its initial ideal have a linear resolution, \begin{equation} \beta_{ij}(S_k/(J_{G_k})) = \beta_{ij}(S_k/\ini_<(J_{G_k})), k = 1,2. \end{equation}

The resolution of $S/J_G$ is obtained by tensoring the resolution of $S_1/J_{G_1}$ with the resolution of $S_2/J_{G_2}$, and the resolution of $S/(\ini_<(J_G))$ is obtained by tensoring the resolution of $S_1/(\ini_<J_{G_1})$ with the resolution of $S_2/(\ini_<J_{G_2})$. Hence, the Betti numbers of $S/J_G$ and $S/(\ini_<(J_G))$ are equal.

\textbf{Case 2:} Let $G$ be a connected graph with $\Delta(G) = <[1,b],[a,n]>$, with $1 < a \leq b < n.$ Then, by \cite[Section 3]{HHHKR}, obviously $J_G$ has two minimal primes, namely $P= J_{K_n}$ and $(\{x_i,y_i\}_{a \leq i \leq b},J_{K_{[1,a-1]}},J_{K_{[b+1,n]}})= Q$. We denote by $K_{[a,b]}$ the complete graph on the vertex set $[a,b]$.

This implies, by \cite[Theorem 3.2]{HHHKR}, that $J_G = P \bigcap Q$.
We have the following exact sequence:
\begin{equation}\label{exact jg} 0 \to S/J_G \to S/P \bigoplus S/Q \to S/(P+Q) \to 0. \end{equation}

Observe that \[P + Q = J_{K_n} + (\{x_i,y_i\}_{a \leq i \leq b},J_{K_{[1,a-1]}},J_{K_{[b+1,n]}})=J_{K_n}+(\{x_i,y_i\}_{a \leq i \leq b}) = J_{K_{[n] \backslash [a,b]}}+(\{x_i,y_i\}_{a \leq i \leq b}).\]

To simplify the notation, we set $\Tor_k(M)_\ell := \Tor_k(M,K)_\ell$ for any $k,\ell \in \mathbb{N}$. We consider the following long exact sequence of Tor which follows from (\ref{exact jg}):
\begin{eqnarray}\label{tor jg}
& \cdots \to \Tor_{i+2}(\frac{S}{J_G})_{i+2} \to (\Tor_{i+2}(\frac{S}{P}) \bigoplus \Tor_{i+2}(\frac{S}{Q}))_{i+2} \to \Tor_{i+2}(\frac{S}{P+Q})_{i+2} \to \nonumber \\
& \to \Tor_{i+1}(\frac{S}{J_G})_{i+2} \to (\Tor_{i+1}(\frac{S}{P}) \bigoplus \Tor_{i+1}(\frac{S}{Q}))_{i+2} \to \Tor_{i+1}(\frac{S}{P+Q})_{i+2} \to \nonumber \\
& \to \Tor_{i}(\frac{S}{J_G})_{i+2} \to (\Tor_{i}(\frac{S}{P}) \bigoplus \Tor_{i}(\frac{S}{Q}))_{i+2} \to \Tor_{i}(\frac{S}{P+Q})_{i+2} \to \cdots \end{eqnarray}

Obviously, we have have $\Tor_{i+2}(\frac{S}{J_G})_{i+2}=0$ and $\Tor_{i+2}(\frac{S}{P})_{i+2}=\Tor_{i+2}(\frac{S}{J_{K_n}})_{i+2}=0$.
As \[\reg(\frac{S}{P+Q})
=\reg(\frac{K[\{x_i,y_i\}_{i \in [n] \setminus [a,b]}]}{J_{K_{[n] \setminus [a,b]}}})
+\reg(\frac{K[\{x_i,y_i\}_{a \leq i  \leq b}]} {(x_i,y_i: a \leq i \leq b)})= 1 + 0 =1,\]
we get $\beta_{ij}(\frac{S}{P+Q})=0$ for $j \geq i +2,$ which implies that $\Tor_{i}(\frac{S}{P+Q})_{i+2} =0.$

From (\ref{tor jg}), we derive the following equality:
\begin{eqnarray}\label{betta jg}
&\beta_{i+2,i+2}(\frac{S}{Q}) - \beta_{i+2,i+2}(\frac{S}{P+Q})+\beta_{i+1,i+2}(\frac{S}{J_G})-(\beta_{i+1,i+2}(\frac{S}{P})+\beta_{i+1,i+2}(\frac{S}{Q})) \nonumber \\
& +\beta_{i+1,i+2}(\frac{S}{P+Q})-\beta_{i,i+2}(\frac{S}{J_G})+\beta_{i,i+2}(\frac{S}{Q}) = 0. \end{eqnarray}

Now consider the ideal $\ini_<(J_G)$. From \cite[Lemma 1.3]{C}, we know that $\ini_<(J_G) = \ini_<(P) \bigcap \ini_<(Q)$ if and only if $\ini_<(P) + \ini_<(Q) = \ini_<(P+Q)$. The latter equality is equivalent to \[\ini_<(J_{K_n}) + (\{x_i,y_i\}_{a \leq i \leq b},\ini_<(J_{K_{[1,a-1]}}),\ini_<(J_{K_{[b+1,n]}})) = (\{x_i,y_i\}_{a \leq i \leq b}) + \ini_<(J_{[n]\backslash [a,b]}).\] But this is obviously true.

Hence, we also have the following exact sequence:
\begin{equation}\label{exact injg} 0 \to S/\ini_<(J_G) \to S/\ini_<(P) \bigoplus S/\ini_<(Q) \to S/\ini_<(P+Q) \to 0. \end{equation}

As in the case of $J_G$, we can consider the following exact sequence of Tor for $\ini_<(J_G)$ which follows from (\ref{exact injg}):\\
\begin{eqnarray*}
& \cdots \to \Tor_{i+2}(\frac{S}{\ini_<(J_G)})_{i+2} \to (\Tor_{i+2}(\frac{S}{\ini_<(P)}) \bigoplus \Tor_{i+2}(\frac{S}{\ini_<(Q)}))_{i+2} \to \Tor_{i+2}(\frac{S}{\ini_<(P+Q)})_{i+2} \nonumber \\
& \to \Tor_{i+1}(\frac{S}{\ini_<(J_G)})_{i+2} \to (\Tor_{i+1}(\frac{S}{\ini_<(P)}) \bigoplus \Tor_{i+1}(\frac{S}{\ini_<(Q)}))_{i+2} \to  \Tor_{i+1}(\frac{S}{\ini_<(P+Q)})_{i+2} \to \nonumber \\
& \to \Tor_{i}(\frac{S}{\ini_<(J_G)})_{i+2} \to (\Tor_{i}(\frac{S}{\ini_<(P)}) \bigoplus \Tor_{i}(\frac{S}{\ini_<(Q)}))_{i+2} \to \Tor_{i}(\frac{S}{\ini_<(P+Q)})_{i+2} \to \cdots \end{eqnarray*}

With the same arguments as in the case of $J_G$, we obtained the equality:
\begin{eqnarray}\label{betta injg}
& \beta_{i+2,i+2}(\frac{S}{\ini_<(Q)}) - \beta_{i+2,i+2}(\frac{S}{\ini_<(P+Q)})+ \beta_{i+1,i+2}(\frac{S}{\ini_<(J_G)})-\beta_{i+1,i+2}(\frac{S}{\ini_<(P)}) \nonumber \\
 &-\beta_{i+1,i+2}(\frac{S}{\ini_<(Q)}) +\beta_{i+1,i+2}(\frac{S}{\ini_<(P+Q)})
 -\beta_{i,i+2}(\frac{S}{\ini_<(J_G)})+\beta_{i,i+2}(\frac{S}{\ini_<(Q)}) = 0. \end{eqnarray}

Compare equations (\ref{betta jg}) and (\ref{betta injg}).
We know, from Theorem \ref{main}, that \[\beta_{i+1,i+2}(\frac{S}{J_G})=\beta_{i+1,i+2}(\frac{S}{\ini_<(J_G)}).\]
We also know that \[\beta_{i+1,i+2}(\frac{S}{P}) = \beta_{i+1,i+2}(\frac{S}{\ini_<(P)}),\] since $P=J_{K_n}.$ In addition, we have  \[\beta_{i+2,i+2}(\frac{S}{Q}) =\beta_{i+2,i+2}(\frac{S}{\ini_<(Q)}),\]
\[\beta_{i+1,i+2}(\frac{S}{Q}) = \beta_{i+1,i+2}(\frac{S}{\ini_<(Q)}),\] and
\[\beta_{i,i+2}(\frac{S}{Q})= \beta_{i,i+2}(\frac{S}{\ini_<(Q)}),\] due the particular form of the ideal $Q$.

Finally, we also have the following equalities:
 \[\beta_{i+2,i+2}(\frac{S}{P+Q})=\beta_{i+2,i+2}(\frac{S}{\ini_<(P+Q)})\]
 and \[\beta_{i+1,i+2}(\frac{S}{P+Q})=\beta_{i+1,i+2}(\frac{S}{\ini_<(P+Q)})\] due the particular form of the ideal $P+Q.$  

Therefore, $\beta_{i,i+2}(\frac{S}{J_G})=\beta_{i,i+2}(\frac{S}{\ini_<(J_G)})$.
\end{proof}

{}

\end{document}